\documentclass[11pt]{article}

\usepackage[T1]{fontenc}
\usepackage{amsmath,amssymb,amsthm,mathtools}

\usepackage{lmodern}
\usepackage[a4paper,top=28mm,bottom=29mm,left=29mm,right=29mm]{geometry}
\usepackage{microtype}
\usepackage{needspace}
\usepackage[numbers,sort&compress]{natbib}
\usepackage[hidelinks]{hyperref}

\hypersetup{
  pdftitle={On a conjecture on Romanoff type representations},
  pdfauthor={Xiao-Hui Yan},
  pdfsubject={On a conjecture on Romanoff type representations},
  pdfkeywords={Romanoff type problem, prime, asymptotic density, sieve methods}
}

\newtheorem{theorem}{Theorem}[section]
\newtheorem{lemma}[theorem]{Lemma}
\newtheorem{corollary}[theorem]{Corollary}
\theoremstyle{remark}

\theoremstyle{plain}
\numberwithin{equation}{section}

\newcommand{\N}{\mathbb N}
\newcommand{\Z}{\mathbb Z}
\newcommand{\PP}{\mathcal P}
\newcommand{\A}{\mathcal A}

\allowdisplaybreaks[1]
\title{On a conjecture on Romanoff type representations}
\author{Xiao-Hui Yan\\[4pt]
  \small School of Mathematics and Statistics, Anhui Normal University,\\[-1pt]
  \small Wuhu 241002, P.\ R.\ China\\[3pt]
  \small\texttt{yanxiaohui\_1992@163.com}}
\date{}

\begin{document}
\maketitle

\begin{abstract}
Let $r_1,\ldots,r_t$ be positive integers, and let $\A$ be the set of integers
of the form $2^{k_1^{r_1}}+\cdots+2^{k_t^{r_t}}$, where
$k_1,\ldots,k_t$ are positive integers. For any positive integer $n$, let $r(n)$ be the
number of representations $n=p+a$ with $p$ an odd prime and $a\in \A$.
We prove that if $r_1^{-1}+\cdots+r_t^{-1}=1$, then, for every fixed
positive integer $m$, the set of integers $n$ with $r(n)\ge m$ has positive
lower asymptotic density. In particular, this confirms a conjecture of
Chen.
\end{abstract}

\noindent\textit{Keywords:} Romanoff type problem; prime; asymptotic density; sieve methods.\\
\textit{2020 Mathematics Subject Classification:} 11P32, 11A67, 11N36.

\section{Introduction}
Let $\PP$ be the set of all odd primes and $\N$ be the set of all positive
integers. For a set $B\subseteq\N$, let
\[
 B(x)=|B\cap[1,x]|.
\]

In 1849, de Polignac \cite{Polignac} conjectured that every odd number greater than $3$ can be
represented as the sum of an odd prime and a power of $2$. In 1934, Romanoff~\cite{Romanoff} proved that the set of positive integers of the form $p+2^k$ with $p\in\PP$ and $k\in\N$ has positive lower asymptotic density. This theorem provides a classical example of two sets of
asymptotic density zero whose sumset has positive lower asymptotic density. In 1950, Erd\H{o}s~\cite[Theorem~3]{Erdos} showed that there exists an arithmetic progression of positive odd integers such that none of them are of the form $p+2^k$, and so the set of positive odd integers which are not of the form $p+2^k$ also has positive lower asymptotic density.
Later, there are some quantitative results on density; see, for example, \cite{ChenSun,HabsiegerRoblot,Pintz,ElsholtzSchlagePuchta}.

In addition, Erd\H{o}s~\cite[Theorem~1]{Erdos} proved that the number of representations of an integer as $p + 2^k$ is unbounded. Chen and Ding~\cite[Corollary~4]{ChenDingErdos} extended this result by showing that the number of
representations of an integer as $p+a$, with $a$ in an infinite set of integers,
is unbounded. Later, they \cite{ChenDingQuantitative} obtained a quantitative result.

In 2010, Chen~\cite{ChenSparse} studied integers of the form $p+2^k$ with $k$
in a subset of $\N$. Elsholtz, Luca and
Planitzer~\cite[Theorems~1 and~2]{ElsholtzLucaPlanitzer} studied integers of the form $p+2^{2^k}+j!$ and $p+2^{2^k}+2^q$, and obtained positive lower density results, where
$j,k$ are nonnegative integers and $p,q$ are primes. Ding~\cite{DingSquares} proved that the set
\[
 \{p+2^{a^2}+2^{b^2}:p\in\PP,\ a,b\in\N\}
\]
has positive lower asymptotic density, answering a question of Yang and Chen \cite{YangChen}.

Let $r_1,\ldots,r_t$ be fixed positive integers, and let
\[
 \A=\left\{2^{k_1^{r_1}}+\cdots+2^{k_t^{r_t}}:
                 k_1,\ldots,k_t\in\N\right\},
 \qquad
 \sigma=\frac1{r_1}+\cdots+\frac1{r_t}.
\]
For any positive integer $n$, let
\[
 r(n)=|\{a\in \A:n-a\in\PP\}|.
\]
It is clear that $r(n)=0$ for every even integer $n$.

Chen and Xu~\cite[Corollary 1.2]{ChenXu} proved that $\PP+\A$ has a positive lower
asymptotic density if $\sigma\ge1$. If $\sigma<1$, then $\PP+\A$ has
asymptotic density zero. Ding and Zhai~\cite{DingZhai} proved the same result with $k_i^{r_i}$ replaced
by $\lfloor k_i^{r_i}\rfloor$ for any positive real numbers $r_1,\ldots,r_t$.

Recently, Chen~\cite[Corollary 1.1]{ChenII} proved that, if $\sigma>1$, then for every positive
integer $m$, the set of positive odd integers $n$ with $r(n)\ge m$ has a positive lower
asymptotic density. In the case $\sigma=1$, he~\cite[Theorem 1.3]{ChenII} proved the same result
for $m=2$ under the additional assumption that $r_1,\ldots,r_t$ are
pairwise distinct. Chen~\cite[Conjecture~1.2]{ChenII} conjectured that
this assumption can be removed. He also noted that the case $r_1=1$
is already covered by his Theorem~1.3 or by a result of Elsholtz and
Schlage-Puchta~\cite[Theorem 3]{ElsholtzSchlagePuchta}.

In this paper, we prove the following result.

\Needspace{10\baselineskip}
\begin{theorem}\label{thm:main}
If $r_1,\ldots,r_t$ are positive integers with $r_1^{-1}+\cdots+r_t^{-1}=1$, then for every positive integer $m$, the set of positive odd integers $n$ with $r(n)\ge m$ has a positive lower asymptotic density.
\end{theorem}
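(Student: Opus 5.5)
Throughout, $N$ is large, $x\asymp N$, and constants $c_1,c_2,\dots$ depend only on $r_1,\ldots,r_t$ and $m$.

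The plan is the standard first-moment/second-moment (Cauchy--Schwarz) argument for Romanoff-type problems, combined with a pigeonhole step that forces many representations. Set $K_i=\lfloor (\log_2 N/(2t))^{1/r_i}\rfloor$ and let $\A_N$ be the set of sums $2^{k_1^{r_1}}+\cdots+2^{k_t^{r_t}}$ with $k_i\le K_i$. Then every element of $\A_N$ is at most $N/2$. Moreover $|\A_N|\asymp\prod_i K_i\asymp(\log N)^{\sigma}=\log N$, provided the number of coincidences among these sums is $O(\prod K_i)$. This holds because two equal sums of powers of two must agree as multisets of exponents up to bounded carries.

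We count the pairs $(p,a)$ with $p\in(N/2,N]$ prime and $a\in\A_N$, and we write $R(n)$ for the number of such pairs with $p+a=n$. The first moment is
\[
\sum_{n\le 2N}R(n)\ \gg\ \frac{N}{\log N}\,|\A_N|\ \gg\ N .
\]

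The heart of the argument is a \emph{weighted} second moment. For fixed $m$, we show that the number of odd $n\le 2N$ with $R(n)\ge m$ is at least $c N$. Let $S_j=\#\{n: R(n)=j\}$. From $\sum_n R(n)\ge c_1N$ and $\sum_n R(n)^2\le c_2N$ we get
\[
\sum_{R(n)\ge m}R(n)\ \ge\ c_1N-(m-1)\cdot\#\{n:1\le R(n)<m\}.
\]
This alone is not enough: Cauchy--Schwarz only yields $\#\{n:R(n)\ge1\}\gg N$. To reach $R(n)\ge m$, I instead apply Cauchy--Schwarz to the $m$-fold correlation. Consider
\[
T_m=\#\{(p_1,\dots,p_m,a_1,\dots,a_m):\ p_1+a_1=\cdots=p_m+a_m,\ a_j \text{ distinct}\}.
\]
Then $\sum_n \binom{R(n)}{m}\ge T_m/m!$. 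The plan is to show two bounds:
\begin{itemize}
\item[(i)] a lower bound $T_m\gg N$ via sieve lower bounds;
\item[(ii)] an upper bound $\sum_n \binom{R(n)}{m}^2\ll N$.
\end{itemize}
Together these give $\#\{n:R(n)\ge m\}\ge (\sum\binom{R}{m})^2/\sum\binom{R}{m}^2\gg N$.

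For (i), I would avoid needing prime $m$-tuples, which are unavailable. Instead, using $\sigma=1$ and the first moment, the average of $R(n)$ over $n$ in the support is bounded below, but a bounded average does not force $R\ge m$. So the lower bound has to come from a different mechanism. The first route I would try is an Erd\H{o}s-type covering-congruence construction, as in Chen--Ding. Choose a modulus $M$ (depending on $m$) and residues so that on a positive-density set of $n$, the numbers $n-a$ for many $a\in\A_N$ avoid small prime factors in a correlated way. Then apply the Selberg upper sieve to the complement to show that typical $n$ in this progression have $R(n)$ above its mean by a factor exceeding $m$.

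Concretely, restrict to $n$ in a residue class $n\equiv b\pmod{M}$, with $M=\prod_{q\le z}q$ and $b$ chosen so that $n-a$ is coprime to $M$ for all $a\in\A_N$ lying in a large subfamily. The counting of pairs then gains the factor $\prod_{q\le z}(1-1/q)^{-1}\asymp\log z$, which can be made larger than any $m$ times the constant from (ii).

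The main obstacle is (ii): the upper bound $\sum_n R(n)^2\ll N$, uniformly after restricting to the progression. By the Selberg sieve, this reduces to bounding
\[
\sum_{a\ne a'\in\A_N}\ \prod_{q\mid a-a'}\Bigl(1+\frac1q\Bigr)\ \ll\ |\A_N|^2 .
\]
Here $a-a'$ is a difference of two sums of $t$ powers of two. Following Romanoff, I would expand $\sum_{d\mid a-a'}\mu^2(d)/d$ and count, for each odd $d$, the pairs with $d\mid a-a'$ using the multiplicative order $\mathrm{ord}_d(2)$. This in turn requires counting $k\le K$ with $k^{r}$ in fixed residue classes modulo $\mathrm{ord}_d(2)$, together with Romanoff's estimate $\sum_d 1/(d\,\mathrm{ord}_d(2))<\infty$.

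The difficulty when $\sigma=1$ and the $r_i$ repeat is that the terms where $a$ and $a'$ share all but one exponent contribute $|\A_N|^2$ times a constant which must stay bounded. That constant is controlled by the number of solutions of $k^r\equiv\ell^r\pmod{\mathrm{ord}_d 2}$, which is $\ll K^2\,\tau(\mathrm{ord}_d2)^{O(1)}/\mathrm{ord}_d 2$ on average. I expect that handling this power-residue counting with repeated exponents is the crux of the whole argument.
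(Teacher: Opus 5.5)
Your preliminary steps are fine: the truncated family $\A_N$ with $|\A_N|\asymp\log N$, the first moment $\sum_n R(n)\gg N$, and the observation that for $\sigma=1$ the mean of $R$ is bounded. The gap is step (i), the only step that actually forces $R(n)\ge m$. It is not supplied, and the mechanism you sketch does not work as stated.

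Restricting to $n\equiv b\pmod M$ multiplies the count by $M/\varphi(M)$ only for the subfamily $\A'_N(b)=\{a\in\A_N:\gcd(b-a,M)=1\}$. So the net gain is $\frac{M}{\varphi(M)}\cdot|\A'_N(b)|/|\A_N|$, not $\prod_{q\le z}(1-1/q)^{-1}$. For each $a$, exactly $\varphi(M)$ classes $b\bmod M$ satisfy $\gcd(b-a,M)=1$. Hence the average of $|\A'_N(b)|$ over $b$ is $|\A_N|\varphi(M)/M$, and the average net gain is exactly $1$. To gain anything you would need a class $b$ modulo which $\A_N$ is badly non-equidistributed at every $q\le z$ simultaneously. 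For $\{2^k\}$ this is Erd\H{o}s's trivial choice $b\equiv 0$ modulo each odd $q$. For general sums $2^{k_1^{r_1}}+\cdots+2^{k_t^{r_t}}$ you give no construction, and that is exactly where the difficulty lies.

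The $T_m$ formulation has the same problem in another form. $T_m\gg N$ asks for $m$ simultaneous primes $n-a_1,\ldots,n-a_m$, which classical sieve lower bounds cannot produce. Also, (ii) for $\binom{R(n)}{m}$ is a $2m$-th moment, not the second moment you go on to discuss.

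The missing idea is Maynard's theorem. For $k=k(m)$ and any admissible $k$-tuple of shifts $h_i\in[1,X^{1/10}]$, at least $cX/(\log X)^k$ integers $u\in[X,2X)$ have at least $m$ of the $u+h_i$ prime and \emph{all} $u+h_i$ free of prime factors below $X^{\rho_0}$.

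The paper applies this with $h=T-a$, where $a$ runs over $k$-subsets of $\mathcal D$. Here $\mathcal D$ is the set of elements of $\A\cap[1,X^{1/10}]$ lying in one class modulo $W=\prod_{p\le k}p$. This guarantees admissibility at the cost of only a constant factor. There are $\binom{|\mathcal D|}{k}\asymp(\log X)^k$ such subsets, so the number $N(u)$ of good $k$-subsets satisfies $\sum_u N(u)\gg X$. Cauchy--Schwarz is then applied to $N(u)$, not to $\binom{R(n)}{m}$.

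The roughness clause is what makes the second moment work. One has $N(u)\le U(u)^k$, where $U(u)$ counts the $a$ for which $u+T-a$ is $X^{\rho}$-rough. Then $\sum_u U(u)^{2k}\ll X$ follows from three ingredients: the upper-bound sieve, Erd\H{o}s's AM--GM reduction of $s$-fold products to pair products, and the Ding--Li estimate $\sum_{b<b'}\prod_{p\mid b'-b,\ 2<p\le\log x}(1+v^3/p)\ll(\log x)^2$ for $\sigma=1$.

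That last estimate is the repeated-exponent input you identified as the crux, and it is already available in the literature. The genuinely new ingredient is needed for the lower bound (i) that you left open.
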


Taking $m=2$ in Theorem~\ref{thm:main} confirms Chen's conjecture.
The following corollary follows immediately from Theorem \ref{thm:main},
\cite[Corollary~1.1]{ChenII}, and the fact that $\PP+\A$ has asymptotic density zero if
$\sigma<1$.

\begin{corollary}\label{cor:criterion}
Let $r_1,\ldots,r_t$ and $m$ be positive integers. Then the set of positive odd integers $n$ with $r(n)\ge m$ has a positive lower asymptotic density if and only if $ r_1^{-1}+\cdots+r_t^{-1}\ge1$.
\end{corollary}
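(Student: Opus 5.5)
The corollary has two directions, and I would prove them separately by reducing to results already available: Theorem~\ref{thm:main}, Chen's \cite[Corollary~1.1]{ChenII}, and a direct counting argument showing that $\PP+\A$ has density zero when $\sigma<1$. Throughout, write $\sigma=r_1^{-1}+\cdots+r_t^{-1}$ and, for fixed $m$,
\[
 S_m=\{n\in\N:\ n \text{ odd},\ r(n)\ge m\}.
\]

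\emph{Sufficiency ($\sigma\ge1$).} I would split into two cases.
\begin{itemize}
\item If $\sigma>1$, then \cite[Corollary~1.1]{ChenII} says directly that $S_m$ has positive lower asymptotic density for every positive integer $m$.
\item If $\sigma=1$, this is exactly the content of Theorem~\ref{thm:main}.
\end{itemize}
No further work is needed in this direction.

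\emph{Necessity ($\sigma<1$).} I would prove the contrapositive: if $\sigma<1$, then $S_m$ has lower density (indeed asymptotic density) zero for every $m\ge1$.

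First, since $m\ge1$, every $n\in S_m$ satisfies $r(n)\ge1$. Hence $n=p+a$ for some $p\in\PP$ and $a\in\A$, so $S_m\subseteq \PP+\A$. It therefore suffices to show that $\PP+\A$ has density zero, which I would verify as follows.

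An element $2^{k_1^{r_1}}+\cdots+2^{k_t^{r_t}}\le x$ forces $k_i^{r_i}\le \log x/\log 2$ for every $i$. So each $k_i$ has at most $(\log x/\log 2)^{1/r_i}$ choices, and
\[
 \A(x)\le \prod_{i=1}^t\Bigl(\frac{\log x}{\log 2}\Bigr)^{1/r_i}\ll (\log x)^{\sigma}.
\]
Any $n\le x$ in $\PP+\A$ is $p+a$ with $p\le x$ and $a\le x$. Combining this with Chebyshev's bound $\pi(x)\ll x/\log x$ gives
\[
 (\PP+\A)(x)\le \pi(x)\,\A(x)\ll x(\log x)^{\sigma-1}=o(x),
\]
since $\sigma<1$. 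Thus $S_m(x)/x\to0$, and $S_m$ does not have positive lower asymptotic density.

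The only real content lies in Theorem~\ref{thm:main} and Chen's result; the corollary itself is bookkeeping. The one point that needs care is that the necessity argument uses only $r(n)\ge1$, so it covers every $m\ge1$ uniformly.
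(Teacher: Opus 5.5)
Your proposal is correct and follows the paper's argument. Sufficiency splits into the case $\sigma>1$, handled by Chen's Corollary~1.1, and the case $\sigma=1$, handled by Theorem~\ref{thm:main}; necessity uses $S_m\subseteq\PP+\A$ together with the fact that $\PP+\A$ has density zero when $\sigma<1$. The only difference is that you prove this last fact via the elementary bound $(\PP+\A)(x)\le\pi(x)\A(x)\ll x(\log x)^{\sigma-1}$, whereas the paper simply cites it as known.
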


\section{Preliminary lemmas}

Throughout this section, $p$ always denotes a prime. Let
\[
 w_s=|\{i:1\le i\le t,\ r_i=s\}|, \qquad w=\prod_{s\ge1}w_s!.
\]

\begin{lemma}\cite[Lemma~2.1]{ChenII}\label{lem:Acount}
Let $r_1,\ldots,r_t$ be positive integers. Then, as $x\to+\infty$, we have
\[
 \A(x)=\left(1+o(1)\right)\frac{1}{w}(\log 2)^{-\frac{1}{r_1}-\cdots-\frac{1}{r_t}}(\log x)^{\frac{1}{r_1}+\cdots+\frac{1}{r_t}}.
\]
\end{lemma}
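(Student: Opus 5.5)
The plan is to count the $t$-tuples $(k_1,\ldots,k_t)\in\N^t$ with $2^{k_1^{r_1}}+\cdots+2^{k_t^{r_t}}\le x$, and then to pass from tuples to distinct elements of $\A$ by accounting for coincidences. Write $L=\log x/\log 2$.

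\textbf{Counting tuples.} Up to a bounded additive defect, the condition $a\le x$ is equivalent to $\max_i k_i^{r_i}\le L$. The defect comes from two sources. First, the sum of the $t$ powers of $2$ exceeds the largest one by at most a factor $t$. Second, the exponents are shifted by at most $\log_2 t$.

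More precisely, the number of tuples satisfying $\max_i k_i^{r_i}\le L-c$ is
\[
\prod_i \lfloor (L-c)^{1/r_i}\rfloor=(1+o(1))L^{\sigma}.
\]
Sandwiching between $L-\log_2 t$ and $L$ therefore shows that the number of tuples with $a\le x$ is $(1+o(1))L^{\sigma}$.

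\textbf{Passing to distinct values.} The key observation is uniqueness of binary representation. If the exponents $k_i^{r_i}$ are pairwise distinct, then the value $a$ determines the multiset $\{k_i^{r_i}\}$.

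Given that multiset, the tuple is recovered up to two sources of ambiguity:
\begin{itemize}
\item[(i)] how the exponents are assigned to indices;
\item[(ii)] coincidences $k^{r}=l^{s}$ with $r\ne s$.
\end{itemize}

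\emph{Generic tuples.} Call a tuple generic if its exponents are pairwise distinct and no exponent $k_i^{r_i}$ is simultaneously a perfect $r_j$-th power, for any $j$ with $r_j\ne r_i$, in a way that affects the count. For such tuples, the only ambiguity is permuting indices that share the same $r$. This gives exactly $w=\prod_s w_s!$ tuples per value, since permuting indices with equal $r_i$ gives distinct tuples once the $k_i$ are distinct.

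\emph{Non-generic tuples.} These contribute $o(L^\sigma)$, by the following checks.
\begin{itemize}
\item Tuples with $k_i^{r_i}=k_j^{r_j}$ for some $i\ne j$: fixing $k_i$ determines $k_j$, so this loses a factor $L^{1/r_j}$. Their count is $O(L^{\sigma-1/r_j})$.
\item Tuples with $k_i^{r_i}=m^{r_j}$, meaning $k_i^{r_i}$ is a perfect $\operatorname{lcm}(r_i,r_j)$-th power: the count of such $k_i$ is $O(L^{1/\operatorname{lcm}})=o(L^{1/r_i})$.
\end{itemize}

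\textbf{Conclusion and the main technicality.} Among values with $a\le x$, those arising from generic tuples number $(1+o(1))L^\sigma/w$. The only remaining issue is this: an element of $\A$ could arise both from a generic tuple and from a non-generic one. Such elements are already counted, so this affects only the upper bound, and the non-generic contributions are $o(L^\sigma)$.

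Hence
\[
\A(x)=(1+o(1))\,w^{-1}(\log 2)^{-\sigma}(\log x)^{\sigma}.
\]

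The main obstacle is the careful bookkeeping of coincidences between exponents with different $r_i$. This requires showing that the relevant counts are of smaller order, which the $\operatorname{lcm}$ bound handles.
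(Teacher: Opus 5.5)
The paper does not prove this lemma itself; it quotes it from Chen. Your overall plan is sound: the inclusions $\{\max_i k_i^{r_i}\le L-\log_2 t\}\subseteq\{a\le x\}\subseteq\{\max_i k_i^{r_i}\le L\}$ give $(1+o(1))L^{\sigma}$ tuples, binary uniqueness fixes the exponent set of a tuple with distinct exponents, and tuples with a repeated exponent number $o(L^\sigma)$.

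\textbf{The gap.} It lies in the step you call the main obstacle. Your bound $O(L^{1/\operatorname{lcm}(r_i,r_j)})=o(L^{1/r_i})$ on the number of $k_i$ with $k_i^{r_i}$ a perfect $r_j$-th power is false whenever $r_j\mid r_i$. In that case $\operatorname{lcm}(r_i,r_j)=r_i$, and \emph{every} $k_i^{r_i}$ is a perfect $r_j$-th power.

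This is not a corner case. It happens whenever $r_j=1<r_i$. It also happens in the paper's own setting $\sigma=1$, for instance $(r_1,r_2,r_3)=(2,4,4)$ or $(2,3,6)$, where $k^4$, resp.\ $k^6$, is always a square. Read literally, your definition then makes every tuple non-generic, and the argument proves nothing. The qualifier ``in a way that affects the count'' is not a condition. If you weaken the definition so that non-generic tuples really are $o(L^\sigma)$, then your assertion that the only ambiguity is permuting indices with equal $r_i$ is no longer automatic and needs proof.

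\textbf{The missing idea.} What is needed is a divisibility argument along cycles.
\begin{enumerate}
\item A second tuple with distinct exponents and the same value places $e_i=k_i^{r_i}$ in slot $\pi(i)$ for some permutation $\pi$ of $\{1,\ldots,t\}$. So each $e_i$ must be a perfect $r_{\pi(i)}$-th power.
\item Suppose $r_{\pi(i)}\mid r_i$ for every $i$. Going around any cycle of $\pi$ gives a closed chain of divisibilities, so all $r$'s on the cycle are equal. Then $\pi$ only permutes indices with the same $r$.
\item Hence any other reassignment forces some $k_i^{r_i}$ to be a perfect $r_j$-th power with $r_j\nmid r_i$. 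Then $k_i$ is a perfect $b$-th power with $b=r_j/\gcd(r_i,r_j)\ge 2$.
\item So there are only $O(L^{1/\operatorname{lcm}(r_i,r_j)})=O(L^{1/(2r_i)})=o(L^{1/r_i})$ such $k_i$.
\end{enumerate}

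\textbf{The fix.} Call a tuple generic if its exponents are distinct and no $k_i^{r_i}$ is a perfect $r_j$-th power for any $j$ with $r_j\nmid r_i$. With this definition:
\begin{itemize}
\item non-generic tuples number $o(L^\sigma)$;
\item every value with a generic preimage has exactly $w$ preimages, all generic;
\item no preimage with a repeated exponent can occur, since such a sum has fewer than $t$ binary digits equal to $1$.
\end{itemize}
Your conclusion then goes through as written.
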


\begin{lemma}\cite[equation (3.17)]{DingLi}\label{lem:pair}
Assume that $\sigma=r_1^{-1}+\cdots+r_t^{-1}=1$. Let $v$ be a positive integer. Then, as $x\to+\infty$, we have
\[
 \sum_{\substack{b<b'\le x\\b,b'\in\A}}
 \prod_{\substack{p\mid(b-b')\\2<p\le\log x}}
 \left(1+\frac{v^3}{p}\right)
 \ll_v (\log x)^{2}.
\]
\end{lemma}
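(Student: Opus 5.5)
The plan is to open up the multiplicative weight into a divisor sum and then count pairs in residue classes. For squarefree odd $d$ write $d\in\mathcal D_x$ if every prime factor $p$ of $d$ satisfies $2<p\le\log x$. Then
\[
 \prod_{\substack{p\mid(b-b')\\2<p\le\log x}}\Bigl(1+\frac{v^3}{p}\Bigr)=\sum_{\substack{d\in\mathcal D_x\\ d\mid b-b'}}\frac{v^{3\omega(d)}}{d}.
\]
So the left-hand side equals $\sum_{d\in\mathcal D_x} v^{3\omega(d)}d^{-1}N_d(x)$, where $N_d(x)$ is the number of pairs $b<b'\le x$ in $\A$ with $b\equiv b'\pmod d$.

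By Lemma~\ref{lem:Acount} with $\sigma=1$ we have $\A(x)\asymp\log x$. Also, writing $K_i=(\log x/\log 2)^{1/r_i}$, the number of tuples $(k_1,\dots,k_t)$ with $b\le x$ is at most $\prod_i K_i\ll\log x$. So it suffices to show
\[
 N_d(x)\ll(\log x)^2\Bigl(\frac{d^{\varepsilon}}{e_d}+\frac{d^{\varepsilon}}{K_1}\Bigr),
\]
where $e_d$ denotes the multiplicative order of $2$ modulo $d$.

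To bound $N_d(x)$, I will count pairs of tuples $(\mathbf k,\mathbf k')$ and fix every coordinate except $k_1$. There are $\ll(\log x)^2/K_1$ choices for the fixed coordinates (all of $\mathbf k'$ and $k_2,\dots,k_t$). The congruence then forces $2^{k_1^{r_1}}\equiv c\pmod d$ for some $c$ determined by the fixed data. This pins $k_1^{r_1}$ to a single class modulo $e_d$. The number of $k_1\le K_1$ with $k_1^{r_1}$ in a prescribed class mod $e_d$ is at most $(K_1/e_d+1)\rho(e_d)$, where $\rho(e)$ is the maximal number of solutions of $y^{r_1}\equiv c\pmod e$. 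The standard bound $\rho(e)\ll_{r_1,\varepsilon}e^{\varepsilon}$ holds, and $e_d\le d$. Multiplying these gives the displayed bound on $N_d(x)$.

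It then remains to sum the two terms over $d\in\mathcal D_x$.

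\emph{The term with $K_1$.} For this term I will drop the $d^\varepsilon$ by a cruder root count: the number of $r$-th roots modulo $e$ is at most $\prod_{q^\alpha\| e}O_r(1)\cdot\gcd$-factors, which is of size $C_r^{\omega(e)}$ for squarefree-ish parts. Failing that, I will simply take $\varepsilon$ small. Since $\sum_{d\in\mathcal D_x}v^{3\omega(d)}d^{-1}\le\prod_{p\le\log x}(1+v^3/p)\ll(\log\log x)^{v^3}$, this term contributes $\ll(\log x)^{2-1/r_1+o(1)}$, which is acceptable.

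\emph{The term with $e_d$.} This needs the Romanoff–Erd\H{o}s–Tur\'an type convergence
\[
 \sum_{d\ \mathrm{odd}}\frac{v^{3\omega(d)}\,C^{\omega(d)}}{d\,e_d^{1-\varepsilon}}<\infty .
\]
I would prove this in the classical way: group $d$ by $e=e_d$, using $d\mid 2^e-1$. The inner sum $\sum_{d\mid 2^e-1}v^{3\omega(d)}C^{\omega(d)}/d$ is $\ll\exp\bigl(O(\sum_{p\mid 2^e-1}1/p)\bigr)\ll(\log\log e)^{O(1)}$. The remaining sum $\sum_e e^{-2+\varepsilon}(\log\log e)^{O(1)}$ converges.

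I expect the main obstacle to be the uniformity of the root count modulo $e_d$. The modulus $e_d$ may be divisible by high powers of $2$ and of primes dividing $r_1$, so the number of $r_1$-th roots is not just $r_1^{\omega(e_d)}$. Any loss here must be absorbed into $d^{\varepsilon}$ (or $C^{\omega(d)}$) without destroying convergence of the Romanoff sum. If the $d^\varepsilon$ loss is too costly against the factor $1/K_1$, I would first try to choose the free coordinate to be the one with the smallest $r_i$, making $K_1$ as large as possible. Failing that, I would split $d$ according to whether $e_d\le K_1^{1/2}$, and treat small $e_d$ using the exact root count for small moduli.
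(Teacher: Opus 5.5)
The paper gives no proof of this lemma (it is quoted from Ding--Li), so your argument must stand on its own. Its skeleton is workable: expand into a divisor sum, free one coordinate using the order $e_d$ of $2$ modulo $d$, then sum a Romanoff-type series. However, three steps fail as written.

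First, the ``standard bound'' $\rho(e)\ll_{r_1,\varepsilon}e^{\varepsilon}$ for the maximal number of roots of $y^{r_1}\equiv c\pmod e$ is false. For $e=q^{r_1}$ and $c=0$ there are $q^{r_1-1}=e^{1-1/r_1}$ roots. Such classes really occur, because the class of $k_1^{r_1}$ modulo $e_d$ is an arbitrary discrete logarithm (e.g.\ $c\equiv1\pmod d$ forces $e_d\mid k_1^{r_1}$). What is true is that the roots lie in at most $(2r_1)^{\omega(e)}\ll_\varepsilon e^{\varepsilon}$ classes modulo some $f\mid e$ with $f\ge e^{1/r_1}$. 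So the saving is in general only $e_d^{-1/r_1+\varepsilon}$, and your displayed bound on $N_d(x)$ is not established.

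Second, the Romanoff step diverges as written. Grouping by the exact value $e=e_d$ and bounding the inner sum by the full sum over $d\mid 2^e-1$ (which is $\ge1$) leaves $\sum_e e^{-1+\varepsilon}$; nothing produces $e^{-2+\varepsilon}$. You need the cumulative form: every $d$ with $e_d\le E$ divides $\prod_{e\le E}(2^e-1)<2^{E^2}$, so $\sum_{e_d\le E}v^{3\omega(d)}/d\ll_v(\log E)^{v^3}$. Partial summation then gives $\sum_d v^{3\omega(d)}d^{-1}e_d^{-\delta}<\infty$ for every $\delta>0$, in particular for $\delta=1/r_1-\varepsilon$.

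Third, and most seriously, the $1/K_1$ term is not controlled. The moduli $d\in\mathcal D_x$ are squarefree products of odd primes up to $\log x$, so they reach size $x^{1+o(1)}$, and
\[
 \sum_{d\in\mathcal D_x}\frac{v^{3\omega(d)}}{d^{1-\varepsilon}}
 =\prod_{2<p\le\log x}\Bigl(1+\frac{v^3}{p^{1-\varepsilon}}\Bigr)
 =\exp\Bigl(\bigl(v^3/\varepsilon+o(1)\bigr)\frac{(\log x)^{\varepsilon}}{\log\log x}\Bigr).
\]
This exceeds every power of $\log x$, so for no fixed $\varepsilon>0$ is it compensated by $K_1^{-1}=(\log x/\log 2)^{-1/r_1}$. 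Switching to $C^{\omega(e_d)}$ does not obviously help either. Since $e_d=\mathrm{lcm}_{p\mid d}\,e_p$ collects prime factors of the various $p-1$, $\omega(e_d)$ is not controlled by $\omega(d)$, and your estimate $(\log\log x)^{v^3}$ no longer applies.

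The missing observation is that $1\le k_1^{r_1}\le L:=\log x/\log 2$, so at most $L/e_d+1$ values of $k_1$ survive. For $e_d>L$ this is $\le2$ with no root-counting loss, and for $e_d\le L$ the loss is at most $e_d^{\varepsilon}\le L^{\varepsilon}$. Together with the first correction this gives
\[
 N_d(x)\ll(\log x)^2e_d^{-1/r_1+\varepsilon}+(\log x)^{2-1/r_1+\varepsilon}.
\]
Summing against $v^{3\omega(d)}/d$ then yields $\ll_v(\log x)^2+(\log x)^{2-1/r_1+\varepsilon}(\log\log x)^{v^3}\ll_v(\log x)^2$ once $\varepsilon<1/r_1$.
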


For any real number $z\ge2$, let
\[
 P(z)=\prod_{p\le z}p.
\]

\begin{lemma}\label{lem:rough}
Assume that $\sigma=r_1^{-1}+\cdots+r_t^{-1}=1$. Let $q$ be a fixed positive integer and let
$0<\rho<1$. Suppose that $X$ is a sufficiently large integer,
$3\le Y\le X$, and $T\in\Z$. For any integer $X\le n<2X$, let
\[
 U(n)=|\{a\in\A\cap[1,Y]:\gcd(n+T-a,P(X^\rho))=1\}|.
\]
Then
\begin{equation}\label{eq:roughmoment}
 \sum_{X\le n<2X}U(n)^q\ll_{q,\rho}X.
\end{equation}
\end{lemma}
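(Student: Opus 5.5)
Expanding the $q$-th power, we would write
\[
 \sum_{X\le n<2X}U(n)^q=\sum_{a_1,\ldots,a_q\in\A\cap[1,Y]}\bigl|\{X\le n<2X:\gcd(n+T-a_i,P(X^\rho))=1\ \text{for } 1\le i\le q\}\bigr|.
\]
We group the $q$-tuples according to the set $\{b_1<\cdots<b_j\}$ of distinct values, with $1\le j\le q$. For fixed distinct $b_1,\ldots,b_j$, the upper bound sieve (Selberg, or Brun's pure sieve with sieving level $z=X^{\rho'}$ for a small $\rho'\le\rho$ depending on $q$) applied to the set of $n$ for which none of $n+T-b_i$ is divisible by a prime $p\le z$ gives
\[
 \ll_{q,\rho} X\prod_{p\le z}\Bigl(1-\frac{\nu(p)}{p}\Bigr),
\]
where $\nu(p)$ is the number of distinct residues among $b_1,\ldots,b_j$ modulo $p$. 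Then $\nu(p)=j$ unless $p\mid \Delta:=\prod_{i<k}(b_k-b_i)$, and always $\nu(p)\ge1$. The product is therefore
\[
 \ll \frac{1}{(\log X)^{j}}\prod_{\substack{p\mid\Delta\\ p\le z}}\Bigl(1+\frac{j}{p}\Bigr)^{O(1)} .
\]
The prime $p=2$ is harmless and contributes a constant. The remainder terms are $\ll z^{O(j)}\ll X^{1/2}$ for $\rho'$ small. Since $\A(Y)\ll\log Y\le\log X$ by Lemma~\ref{lem:Acount}, these remainders contribute only $O(X^{1/2}(\log X)^q)$ in total.

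The key step is bounding $\sum_{b_1<\cdots<b_j}\prod_{p\mid\Delta,\,2<p\le z}(1+j/p)^{C}$ by $\ll(\log X)^j$. First we truncate the primes. Primes $\log X<p\le z$ dividing $\Delta$ number at most $O(\log X/\log\log X)$, since $|\Delta|\le X^{O(q^2)}$. Their contribution is therefore
\[
 \prod(1+O(1/p))\le \exp\Bigl(O\Bigl(\frac{\log X/\log\log X}{\log X}\Bigr)\Bigr)=O(1).
\]
Next, the product over the pairs $(i,k)$ is bounded by $\prod_{i<k}\prod_{p\mid b_k-b_i,\,2<p\le\log X}(1+C j/p)$. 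The plan is to apply H\"older's inequality, or more simply the AM-GM inequality
\[
 \prod_{i=1}^{N}f_i\le \frac1N\sum_{i=1}^{N} f_i^{N},\qquad N=\binom{j}{2}.
\]
Each $f^{N}$ is again a product of the same shape, $\prod_{p\mid b-b'}(1+v^3/p)$, for a suitable $v=v(q)$, because $(1+c/p)^N\le 1+c'/p$ for $p>2$ with $c'$ depending on $c,N$. Each term then depends only on one pair $(b_i,b_k)$. The remaining $j-2$ free variables give a factor $\A(Y)^{j-2}\ll(\log X)^{j-2}$, and the pair sum is $\ll_v(\log X)^2$ by Lemma~\ref{lem:pair}, applied with $x=X$ (using $Y\le X$). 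For $j=1$ there is no pair at all and the count is trivially $\A(Y)\ll\log X$. This yields $\ll(\log X)^j$ for each $j$, and multiplying by the sieve factor $(\log X)^{-j}$ gives $\ll X$ for each $j$. Summing over $j\le q$ and over the $O_q(1)$ multiplicity patterns gives \eqref{eq:roughmoment}.

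The main obstacle is the sieve step. We need an upper bound sieve uniform in the shifts $b_i$, which can be as large as $X$, with explicit local factors $\nu(p)$. We also need to take care that $\nu(p)$ can be smaller than $j$, which is exactly what produces the product over $p\mid\Delta$. Here we would use the standard Selberg-sieve bound $\ll X\prod_{p\le z}(1-\nu(p)/p)$, valid because $\nu(p)<p$ for all $p>j$. Small primes $p\le j$ with $\nu(p)=p$ are not a problem: then the set is empty, since $n+T-b_i$ would be divisible by $p$ for some $i$, and the bound holds trivially. The reduction from pair products of arbitrary exponent to the exact form $(1+v^3/p)$ in Lemma~\ref{lem:pair} just requires choosing $v$ large in terms of $q$.
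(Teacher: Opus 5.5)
Your proposal is correct and follows the paper's proof essentially step for step. You expand the moment and group by distinct values, apply a $j$-dimensional upper-bound sieve whose local factors $\nu(p)$ are compared against $(1-1/p)^j$, and use Erd\H{o}s's AM--GM trick to reduce to single-pair products $\prod(1+v^3/p)$. You then truncate the large primes and finish with Lemma~\ref{lem:pair} and $\A(Y)\ll\log X$.

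The only differences are minor. You truncate at $\log X$ and apply Lemma~\ref{lem:pair} with $x=X$ by positivity, which avoids the paper's separate treatment of bounded $Y$. Also, Brun's \emph{pure} sieve cannot reach level $X^{\rho'}$, so the Selberg (or fundamental-lemma) sieve you ultimately settle on is the right choice.
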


\begin{proof}
If $3\le Y<Y_0$, then
\[
 U(n)\le \A(Y_0),
\]
and hence
\[
 \sum_{X\le n<2X}U(n)^q
 \le X\A(Y_0)^q
 \ll_q X.
\]
Thus we may assume that $Y$ is a sufficiently large integer.

Let $z=X^\rho$ and $M=\A(Y)$. If $M=0$, then $U(n)=0$ and so \eqref{eq:roughmoment} is immediate.
For $a\in\A\cap[1,Y]$, write
\[
 \mathbf 1_a(n)=\mathbf 1_{\gcd(n+T-a,P(z))=1}.
\]
Then
\[
 U(n)=\sum_{a\in\A\cap[1,Y]}\mathbf 1_a(n)
\]
and
\[
 \sum_{X\le n<2X}U(n)^q=\sum_{a_1,\ldots,a_q\in\A\cap[1,Y]}\sum_{X\le n<2X}\mathbf 1_{a_1}(n)\cdots\mathbf 1_{a_q}(n).
\]
For a $q$-tuple $(a_1,\ldots,a_q)$, if there are $1\le s\le q$ distinct numbers $b_1,\ldots,b_s$, then
\[
 \mathbf 1_{a_1}(n)\cdots\mathbf 1_{a_q}(n)=\mathbf 1_{b_1}(n)\cdots\mathbf 1_{b_s}(n).
\]
Fix $1\le s\le q$ and choose pairwise distinct
$b_1,\ldots,b_s\in\A\cap[1,Y]$. Let
\begin{align}
 S(b_1,\ldots,b_s)&=
 \sum_{X\le n<2X}\mathbf 1_{b_1}(n)\cdots\mathbf 1_{b_s}(n)\notag\\
 &=\left|\left\{X\le n<2X:
 \gcd\!\left(\prod_{i=1}^s(n+T-b_i),P(z)\right)=1\right\}\right|.
\end{align}
Then
\begin{equation}\label{eq:U}
 \sum_{X\le n<2X}U(n)^q\ll_q\sum_{s=1}^{q}\sum_{\substack{b_1<\cdots<b_s\\b_i\in \A\cap[1,Y]}}S(b_1,\ldots,b_s).
\end{equation}
Next, we will obtain an upper bound for $S(b_1,\ldots,b_s)$.

For each prime $p$, let
\[
 I_p=\{n\in\{0,\ldots,p-1\}:n+T-b_i\equiv0\pmod p\text{ for some }1\le i\le s\},
 \qquad
 \nu_p=|I_p|.
\]
If $\nu_p=p$ for some $p\le z$, then $S(b_1,\ldots,b_s)=0$. If $\nu_p<p$ for all primes $p\le z$,
then $\nu_p\le \min\{s,p-1\}$. By the upper bound sieve for residue classes (see Ford \cite[Exercise 2.2]{FordSieve}), we have
\begin{equation}\label{eq:sieveV}
 S(b_1,\ldots,b_s)
 \ll_s
 X V(z),
 \qquad
 V(z)=\prod_{p\le z}
 \left(1-\frac{\nu_p}{p}\right).
\end{equation}
Write
\[
V(z)=\prod_{p\le z}\left(1-\frac1p\right)^s
 \prod_{p\le z}
 \left(1-\frac{\nu_p}{p}\right)
 \left(1-\frac1p\right)^{-s}.
 \]

If $s=1$, then $\nu_p=1$ and so
\[
S(b_1)\ll_{\rho} XV(z)=X\prod_{p\le z}
 \left(1-\frac{1}{p}\right)\ll_{s,\rho}\frac{X}{\log X}.
\]

Now we assume that $s\ge 2$. If $p>2s$ and $p\nmid b_i-b_j$ for any $1\le i<j\le s$, then $n+T-b_1,\ldots,n+T-b_s$ are pairwise distinct modulo $p$, and so
$\nu_p=s$. It follows from Bernoulli's inequality that
\[
 1-\frac{\nu_p}{p}
 =1-\frac{s}{p}
 \le \left(1-\frac1p\right)^s.
\]
Therefore
\[
 \left(1-\frac{\nu_p}{p}\right)
 \left(1-\frac1p\right)^{-s}
 \le1.
\]
If $p>2s$ and $p\mid b_i-b_j$ for some $1\le i<j\le s$, then
\[
 \left(1-\frac{\nu_p}{p}\right)
 \left(1-\frac1p\right)^{-s}
 \le
 \left(1-\frac1p\right)^{-s}
 \le 1+\frac{C_s}{p},
\]
where $C_s>0$ depends only on $s$. For the finitely many primes
$p\le2s$, since $\nu_p<p$, we also have
\[
 \prod_{p\le2s}
 \left(1-\frac{\nu_p}{p}\right)
 \left(1-\frac1p\right)^{-s}
 \ll_s1.
\]
It follows that
\[
\begin{aligned}
 V(z)
 &=
 \prod_{p\le z}\left(1-\frac1p\right)^s
 \prod_{p\le z}
 \left(1-\frac{\nu_p}{p}\right)
 \left(1-\frac1p\right)^{-s} \\
 &\ll_s
 \prod_{p\le z}\left(1-\frac1p\right)^s
 \prod_{1\le i<j\le s}
 \prod_{\substack{p\mid b_i-b_j\\p>2}}
 \left(1+\frac{C_s}{p}\right).
\end{aligned}
\]
By Mertens' theorem and $z=X^\rho$,
\[
 \prod_{p\le z}\left(1-\frac1p\right)^s
 \ll_s\frac1{(\log z)^s}
 \ll_{s,\rho}\frac1{(\log X)^s}.
\]
Then
\[
 V(z)\ll_{s,\rho}\frac1{(\log X)^s}
 \prod_{1\le i<j\le s}
 \prod_{\substack{p\mid b_i-b_j\\p>2}}
 \left(1+\frac{C_s}{p}\right).
\]
Following Erd\H{o}s' idea \cite[Theorem 2]{Erdos}, and applying the inequality of the geometric and arithmetic means
\[
 x_1x_2\cdots x_m \le \frac{x_1^m+\cdots+x_m^m}{m},
\]
we have
\[
 \prod_{1\le i<j\le s}
 \prod_{\substack{p\mid b_i-b_j\\p>2}}
 \left(1+\frac{C_s}{p}\right)
 \le \frac{1}{\binom{s}{2}} \sum_{1\le i<j\le s}
 \prod_{\substack{p\mid b_i-b_j\\p>2}}
 \left(1+\frac{C_s}{p}\right)^{\binom{s}{2}}.
\]
It is clear that there exists a positive integer $v=v(s)$ such that $v^3\ge(1+C_s)^{\binom{s}{2}}-1$ and
\[
\left(1+\frac{C_s}{p}\right)^{\binom{s}{2}}=1+\sum_{k=1}^{\binom{s}{2}}\binom{\binom{s}{2}}{k}\frac{C_s^k}{p^k}\le 1+\frac{(1+C_s)^{\binom{s}{2}}-1}{p}\le1+\frac{v^3}{p}.
\]
Hence,
\begin{equation}\label{eq:Vz-direct}
 V(z)\ll_{s,\rho}\frac1{(\log X)^s}
 \sum_{1\le i<j\le s}
 \prod_{\substack{p\mid b_i-b_j\\p>2}}
 \left(1+\frac{v^3}{p}\right).
\end{equation}
Combining \eqref{eq:sieveV} and \eqref{eq:Vz-direct}, we obtain
\begin{equation}\label{eq:tupleupper}
 S(b_1,\ldots,b_s)
 \ll_{s,\rho}\frac{X}{(\log X)^s}
 \sum_{1\le i<j\le s}
 \prod_{\substack{p\mid b_i-b_j\\p>2}}
 \left(1+\frac{v^3}{p}\right).
\end{equation}
It follows from \eqref{eq:U} that
\begin{align*}
\sum_{X\le n<2X}U(n)^q
&\ll_q \sum_{b_1\in \A\cap[1,Y]}S(b_1)+\sum_{s=2}^{q}\sum_{\substack{b_1<\cdots<b_s\\b_i\in \A\cap[1,Y]}}S(b_1,\ldots,b_s)\\
&\ll_{q,\rho}\frac{XM}{\log X}+\sum_{s=2}^{q}\frac{X}{(\log X)^s}\sum_{1\le i<j\le s}\sum_{\substack{b_1<\cdots<b_s\\b_i\in \A\cap[1,Y]}}
 \prod_{\substack{p\mid b_i-b_j\\p>2}}
 \left(1+\frac{v^3}{p}\right)\\
&\ll_{q,\rho}\frac{XM}{\log X}+\sum_{s=2}^{q}\frac{XM^{s-2}}{(\log X)^s}\sum_{\substack{b<b'\\b,b'\in \A\cap[1,Y]}}
 \prod_{\substack{p\mid b-b'\\p>2}}
 \left(1+\frac{v^3}{p}\right).
\end{align*}
Since $1\le b'-b<Y$, it follows that the number of distinct prime divisors of
$b'-b$ is at most $\log Y/\log2$, and so
\[
 \prod_{\substack{p\mid b-b'\\p>\log Y}}
 \left(1+\frac{v^3}{p}\right)
 \le
 \left(1+\frac{v^3}{\log Y}\right)^{\log Y/\log2}
 \ll_v1.
\]
By Lemma \ref{lem:pair}, we have
\[
\sum_{X\le n<2X}U(n)^q
\ll_{q,\rho}\frac{XM}{\log X}+\sum_{s=2}^{q}\frac{XM^{s-2}(\log Y)^{2}}{(\log X)^s}.
\]
By Lemma \ref{lem:Acount} and $\sigma=1$, we have $M\ll \log Y\le \log X$. Hence,
\[
\sum_{X\le n<2X}U(n)^q
\ll_{q,\rho}X.
\]
This completes the proof of Lemma \ref{lem:rough}.
\end{proof}

A set $\{h_1,\ldots,h_k\}$ of distinct integers is called \emph{admissible} if,
for every prime $p$, the residues $-h_1,\ldots,-h_k$ do not cover all
residue classes modulo $p$.

\begin{lemma}\label{lem:cluster}
For every positive integer $m$, there exist a positive integer $k$
and positive constants $\rho_0,c,X_0$, all depending only on $m$,
such that at least $cX/(\log X)^k$ integers $u\in[X,2X)$ satisfy
\[
 |\{i:1\le i\le k,\ u+h_i\in\PP\}|\ge m
\]
and
\[
 \gcd(u+h_i,P(X^{\rho_0}))=1\qquad(1\le i\le k),
\]
where $X$ is any integer with $X\ge X_0$ and
$\{h_1,\ldots,h_k\}\subseteq[1,X^{1/10}]$ is any admissible
$k$-element set of integers.
\end{lemma}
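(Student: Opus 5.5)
This is the Maynard--Tao theorem on primes in admissible tuples, in the uniform form that also forces every $u+h_i$ to be free of small prime factors. I would prove it with Maynard's sieve (see also Maynard, \emph{Dense clusters of primes in subsets}, which proves essentially this statement with uniformity in the tuple). The plan is to take $W=\prod_{p\le D_0}p$ with $D_0=\log\log\log X$. Then I fix a residue class $u\equiv \nu_0\pmod W$ such that every $\nu_0+h_i$ is coprime to $W$; admissibility of $\{h_i\}$ guarantees such a class exists.

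I would then introduce Maynard's weights
\[
w_u=\Bigl(\sum_{d_i\mid u+h_i\ \forall i} \lambda_{d_1,\ldots,d_k}\Bigr)^2 ,
\]
with $\lambda$ built from a smooth symmetric function $F$ supported on the simplex, at level $R=X^{\theta/2-\epsilon}$. Bombieri--Vinogradov gives $\theta=1/2$. The two standard asymptotics are
\[
S_1=\sum w_u \sim \frac{\varphi(W)^kX(\log R)^k}{W^{k+1}}\,I_k(F),\qquad
S_2^{(i)}=\sum \mathbf 1_{\PP}(u+h_i)\,w_u \sim \frac{\varphi(W)^kX(\log R)^{k+1}}{W^{k+1}\log X}\,J_k(F).
\]
Since $h_i\le X^{1/10}$, shifting the interval by $h_i$ costs nothing. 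The only $h$-dependence enters through the singular-series–type local factors at primes $p>D_0$, and these factors are $1+O(k^2/D_0)$-controlled uniformly. In fact the $W$-trick removes them, except through $\prod_{p\mid h_i-h_j,\,p>D_0}$, and these contribute only to error terms. Maynard's choice of $F$ gives $\sum_i J_k/I_k\ge c\log k$, so for $k=k(m)$ large we get
\[
\sum_i S_2^{(i)}-(m-1+\text{small})\,S_1 \gg S_1 .
\]

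To impose the condition $\gcd(u+h_i,P(X^{\rho_0}))=1$, I would subtract the contribution of $u$ for which some $u+h_i$ has a prime factor $p\le X^{\rho_0}$ with $p>D_0$. For $p\le D_0$ such factors are already excluded by the choice of $\nu_0$. The subtraction, exactly as in Maynard's Proposition 9.6 ("dense clusters"), is
\[
\sum_{D_0<p\le X^{\rho_0}}\ \sum_{p\mid u+h_i} w_u \ll \rho_0\, k^{O(1)}\,\frac{\log X}{\log R}\cdot(\ldots)\,S_1 ,
\]
proved by the same sieve computation with the extra divisibility condition. It is bounded by $O(k^{2}\rho_0\, S_1)$ roughly, so choosing $\rho_0$ small in terms of $k$ makes it negligible. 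The weighted count of good $u$ then exceeds $cS_1$.

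The last step converts the weighted count into an actual count. Every term of the sum is counted with weight $w_u$, and a pointwise bound $w_u\ll (\log R)^{2k}$ would lose too much, so instead I would use Cauchy--Schwarz, or equivalently Maynard's bound $\sum w_u^2\ll X(\log R)^{4k}\cdots$. The better way is the one Maynard uses. For $u$ with all $u+h_i$ free of primes below $X^{\rho_0}$, each $u+h_i$ has at most $O(1/\rho_0)$ prime factors, so only $O_{k,\rho_0}(1)$ divisor tuples contribute. Hence $w_u\ll_{k,\rho_0}\lambda_{\max}^2\ll (\log R)^{2k}$, with $\lambda_{\max}\ll (\log R)^k$. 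This yields
\[
\#\{u\}\gg \frac{S_1}{(\log R)^{2k}}\gg \frac{X}{(\log X)^k}\cdot\frac{\varphi(W)^k}{W^{k+1}} .
\]
The factor $\varphi(W)^k/W^{k+1}$ is $(\log X)^{o(1)}$ since $D_0$ grows so slowly. That loss is a problem, so I would instead fix $D_0$ as a constant depending on $m$: with $D_0$ fixed, $W$ is a constant, and the uniformity errors are $O(k^2/D_0)$ multiplicative perturbations, which are acceptable once $D_0$ is large in terms of $k$. This gives the required count $cX/(\log X)^k$, with $k$ replaced if necessary by a fixed larger value.

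The main obstacle is uniformity in the admissible tuple. The standard asymptotics involve $\prod_{p\mid \Delta}(1+O(1/p))$, with $\Delta=\prod_{i<j}(h_i-h_j)$, and this product can be as large as $(\log\log X)^{O(k^2)}$. I would handle it as Maynard does: the main terms involve only the local densities, which are bounded below uniformly given admissibility and $W$-normalisation. The errors use Bombieri--Vinogradov with moduli up to $X^{1/2-\epsilon}$, which is uniform because $h_i\le X^{1/10}$.
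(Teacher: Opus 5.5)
The skeleton of your proposal is the paper's: the paper imports Maynard's dense-clusters Theorem~3.1 and Proposition~6.1. It first checks Hypothesis~1 with $B=1$, $\theta=1/3$, via Bombieri--Vinogradov on intervals shifted by $h\le X^{1/10}$. It then forms the penalised sum $S=\sum_u F(u)\lambda_u$ with $F(u)=\sum_i\mathbf 1_{\PP}(u+h_i)-m-k\sum_i\sum_{p\mid u+h_i,\,p<X^{\rho}}1$. Finally it converts $S\gg X(\log X)^k$ into a count using $\lambda_u\ll(\log X)^{2k}$ on the $u$ with $F(u)>0$, which is exactly your last step.

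The gap is in uniformity in the tuple, the one part you argue yourself rather than import. With the standard Maynard weights and a fixed $W=\prod_{p\le D_0}p$, the evaluation of $S_1$ and $S_2^{(i)}$ assumes that no prime $p>D_0$ divides any $h_i-h_j$. For admissible tuples in $[1,X^{1/10}]$ this fails. The primes $p\mid\Delta$ with $p>D_0$ are then neither $1+O(k^2/D_0)$ perturbations nor mere error terms: they change the main terms, and by different factors in $S_1$ and in $S_2^{(i)}$.

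For example, take $h_i=iP$ with $P=\prod_{p\le\frac1{20}\log X}p$; this set is admissible and lies in $[1,X^{1/10}]$. For $D_0<p\le\frac1{20}\log X$, all $u+h_i$ are congruent modulo $p$. So pairs with $p\mid d_i$, $p\mid e_j$ ($i\ne j$) now have nonzero count, and they inflate $S_1$ by about $1+k(k-1)/p$ relative to Maynard's formula. In $S_2^{(i)}$, such $p$ cannot divide any $u+h_j$, because $u+h_i$ is a prime larger than $p$; the inflation there is only about $1+(k-1)/p$. Hence $\sum_iS_2^{(i)}/S_1$ is multiplied by roughly $\prod_{D_0<p\le\frac1{20}\log X}\bigl(1-(k-1)^2/p\bigr)\to0$. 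The inequality $\sum_iS_2^{(i)}>(m-1+\text{small})S_1$ therefore fails for large $X$, whatever fixed $D_0$ and $k$ you choose. Letting $D_0\to\infty$ slowly does not help either, since $\Delta$ can still contain every prime in $(D_0,\frac1{20}\log X]$.

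Two things are missing:
\begin{itemize}
\item a weight system adapted to $\mathcal L$ whose main terms for $S_1$ and for every $S_2^{(i)}$ carry the same singular-series factor $\mathfrak S(\mathcal L)$;
\item the lower bound $\mathfrak S(\mathcal L)\ge e^{-O(k)}$, which is what gives $S\gg X(\log X)^k$ in absolute terms for the counting step.
\end{itemize}
Both are supplied by Maynard's Proposition~6.1, which your proposal only gestures at with ``handle it as Maynard does''. Once you cite it, the $W$-trick and the choice of $D_0$ become unnecessary. What remains is to verify Hypothesis~1 uniformly for $1\le h_i\le X^{1/10}$ and $k\le(\log X)^{1/5}$, as the paper does.

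Separately, your estimate of the loss from fixing one class modulo $W$ is off. Since $\lambda_{\max}\ll_k\bigl(\frac{\varphi(W)}{W}\log R\bigr)^k$, that loss is only a factor $1/W$, and summing over the admissible classes $\nu_0$ recovers it.
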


\begin{proof}
This lemma follows from Maynard's result \cite[Theorem~3.1 and its proof]{Maynard}.
For completeness, we give the details of the application. Let
\[
 L_i(u)=u+h_i\qquad(1\le i\le k),\qquad
 \mathcal L=\{L_1,\ldots,L_k\}.
\]
In the notation of \cite[Hypothesis~1]{Maynard}, let
\[
 (A,\mathcal L,P,B,x,\theta)
 =\left(\N,\mathcal L,\PP,1,X,\frac13\right).
\]
The admissibility of $\{h_1,\ldots,h_k\}$ implies that $\mathcal L$
is admissible. We first verify  \cite[Hypothesis~1]{Maynard}, uniformly for
$1\le h_i\le X^{1/10}$.

Since $X$ is an integer, it follows that
\[
 |\{X\le u<2X:u\equiv a\pmod q\}|=\frac Xq+O(1).
\]
Thus,
\[
 \sum_{q\le X^{1/3}}\max_{0\le a<q}
 \left||\{X\le u<2X:u\equiv a\pmod q\}|-\frac Xq\right|
 \ll X^{1/3}\ll\frac{X}{(\log X)^{100k^2}},
\]
and, for any $q\le X^{1/3}$,
\[
 |\{X\le u<2X:u\equiv a\pmod q\}|
 \le\frac Xq+1\ll\frac{X}{q}.
\]
These are parts (1) and (3) of Hypothesis~1.

For any nonnegative integer $h$, let
\[
 \pi_h=|\PP\cap[X+h,2X+h)|,\qquad
 \pi_h(q,b)=|\{p\in\PP\cap[X+h,2X+h):p\equiv b\pmod q\}|.
\]
By the Bombieri--Vinogradov theorem,
\[
 \sum_{q\le X^{1/3}}\max_{(b,q)=1}
 \left|\pi_0(q,b)-\frac{\pi_0}{\varphi(q)}\right|
 \ll_k\frac{X}{(\log X)^{100k^2+2}}.
\]
Clearly, $|\pi_h(q,b)-\pi_0(q,b)|\le 2h$ and $|\pi_h-\pi_0|\le 2h$. Thus, by the prime number theorem,
\begin{align*}
 \sum_{q\le X^{1/3}}\max_{(b,q)=1}
 \left|\pi_h(q,b)-\frac{\pi_h}{\varphi(q)}\right|
 &\ll_k\frac{X}{(\log X)^{100k^2+2}}+hX^{1/3}\\
 &\ll_k\frac{X}{(\log X)^{100k^2+2}}\\
 &\ll\frac{\pi_h}{(\log X)^{100k^2}},
\end{align*}
uniformly for $1\le h\le X^{1/10}$. This is part (2) of Hypothesis~1.

For $\alpha=1/10$, the coefficient and $B$ bounds in
\cite[Theorem~3.1]{Maynard} hold, since $a_i=1$,
$1\le b_i=h_i\le X^{1/10}$, and $B=1$. Take $\delta=1/2$. Then
\[
 \frac1k\sum_{i=1}^k\pi_{h_i}\ge\delta\frac{X}{\log X}
\]
for all sufficiently large $X$, which is the condition
in \cite[Theorem~3.1]{Maynard}. Let $c_1>0$ be the constant in
\cite[equation~(6.10)]{Maynard}. Choose $k=k(m)$ sufficiently large to
satisfy
\begin{equation}\label{eq:condition610}
 \delta>(\log k)^{-1},\qquad
 3c_1\delta\log k-2m\ge1.
\end{equation}
Following the proof of \cite[Theorem~3.1]{Maynard}, take
\[
 \rho=\frac{c_0}{k^3\log k},
\]
where $c_0>0$ is the sufficiently small constant chosen there.
For a sufficiently large integer $X$, we have
\[
 k\le(\log X)^{1/10},\qquad
 \frac{k(\log\log X)^2}{\log X}\le\rho\le\frac{\theta}{10}=\frac1{30}.
\]
The first inequality also implies $k\le(\log X)^{1/5}$, as required
in the proof of \cite[Theorem~3.1]{Maynard}.

Let $\lambda_u$ be the non-negative weights whose existence is guaranteed by \cite[Proposition~6.1]{Maynard} with $R=X^{\theta/3}=X^{1/9}$, and let
\[
 F(u)=\sum_{i=1}^k\mathbf1_{\PP}(u+h_i)-m
       -k\sum_{i=1}^k\sum_{\substack{p\mid u+h_i\\p<X^\rho}}1,
 \qquad
 S=\sum_{X\le u<2X}F(u)\lambda_u.
\]
Then $S$ is precisely the sum in
\cite[equation~(6.2)]{Maynard}. In the proof of \cite[Theorem~3.1]{Maynard}, Maynard noted that for
any $X\le u <2X$, $F(u)$ is positive only if at least $m+1$ of the $u+h_1,\ldots,u+h_k$ are
primes in $\PP$, and none of the $u+h_1,\ldots,u+h_k$ have any prime factors less than $X^\rho$. It follows from \eqref{eq:condition610} and \cite[equation~(6.10)]{Maynard} that there exists an integer $u\in[X,2X)$ such that $F(u)$ is positive. Note that $P(z)=\prod_{p\le z}p$. Take $\rho_0=\rho/2$ and so this integer $u$ satisfies
\[
 |\{i:1\le i\le k,\ u+h_i\in\PP\}|\ge m
\]
and
\[
 \gcd(u+h_i,P(X^{\rho_0}))=1\qquad(1\le i\le k).
\]
Next, we will show that there are at least $cX/(\log X)^k$ integers $u\in[X,2X)$ such that $F(u)$ is positive.

Let $\mathcal G=\{u\in\Z:X\le u<2X,\ F(u)>0\}$. It follows from
\cite[equations~(6.10)--(6.11)]{Maynard} that
\[
 S\gg_m X(\log X)^k.
\]
It follows from \cite[equation~(6.3)]{Maynard} that
\[
 \lambda_u\ll_{m}(\log X)^{2k}
 \qquad(u\in\mathcal G).
\]
Since $F(u)\le k$ and $\lambda_u\ge0$,
it follows that
\[
 X(\log X)^k\ll_m S
 \le\sum_{u\in\mathcal G}F(u)\lambda_u
 \ll_m|\mathcal G|(\log X)^{2k}.
\]
Hence
\[
 |\mathcal G|\gg_m\frac{X}{(\log X)^k}.
\]
This completes
the proof of Lemma~\ref{lem:cluster}.
\end{proof}

\section{Proof of the main theorem}

\begin{proof}[Proof of Theorem~\ref{thm:main}]
For every positive integer $m$, let $k$ and $\rho_0$ be given by
Lemma~\ref{lem:cluster}, and let
\[
 0<\rho<\min\{\rho_0,1/2\}.
\]
For a sufficiently large integer $X$, let
\[
 Y=X^{1/10},\qquad T=\lfloor Y\rfloor+1,\qquad
 \mathcal B=\A\cap[1,Y],\qquad W=\prod_{p\le k}p.
\]
By Lemma~\ref{lem:Acount} and $\sigma=1$, we have $|\mathcal B|\asymp\log X$. Let $\mathcal D$ be a largest subset of $\mathcal B$ such that all elements are congruent modulo $W$. Then
\begin{equation}\label{eq:Dsize}
 |\mathcal D|\ge\frac{|\mathcal B|}{W}\gg\log X.
\end{equation}
It is clear that, for every $k$-element subset $H=\{a_1,\ldots,a_k\}\subseteq\mathcal D$, the set
$\{T-a_1,\ldots,T-a_k\}$ is admissible, and
\[
 1\le T-a_i\le Y=X^{1/10}.
\]

For any integer $X\le u<2X$, let $N(u)$ be the number of $k$-element subsets
$H\subseteq\mathcal D$ satisfying
\[
 |\{a\in H:u+T-a\in\PP\}|\ge m
\]
and
\[
 \gcd(u+T-a,P(X^\rho))=1\qquad(a\in H).
\]
Since $\rho<\rho_0$, it follows from Lemma~\ref{lem:cluster} and \eqref{eq:Dsize} that
\begin{equation}\label{eq:Nfirst}
 \sum_{X\le u<2X}N(u)
 \gg\binom{|\mathcal D|}{k}\frac{X}{(\log X)^k}
 \gg X.
\end{equation}
Let
\[
 U(u)=|\{a\in\mathcal B:\gcd(u+T-a,P(X^\rho))=1\}|.
\]
Then
\[
 N(u)\le\binom{U(u)}k\le U(u)^k.
\]
By Lemma~\ref{lem:rough} with $q=2k$,
\begin{equation}\label{eq:Nsecond}
 \sum_{X\le u<2X}N(u)^2
 \le\sum_{X\le u<2X}U(u)^{2k}\ll X.
\end{equation}
By \eqref{eq:Nfirst}, \eqref{eq:Nsecond} and the Cauchy--Schwarz inequality,
\[
 |\{X\le u<2X:N(u)>0\}|
 \ge\frac{\left(\sum_{X\le u<2X}N(u)\right)^2}
            {\sum_{X\le u<2X}N(u)^2}
 \gg X.
\]
If $N(u)>0$, then $r(u+T)\ge m$. Hence,
\[
 |\{n\le3X:r(n)\ge m\}|\ge |\{X\le u<2X:r(u+T)\ge m\}|\gg X.
\]
This completes the proof of Theorem~\ref{thm:main}.
\end{proof}

\noindent\textbf{Acknowledgments.} This work is supported by the National Natural Science Foundation of China (Grant
No. 12371005).

\end{document}